\newtheorem{theorem}{Theorem}
\theoremstyle{plain}
\newtheorem{acknowledgement}{Acknowledgement}
\newtheorem{condition}{Condition}
\newtheorem{definition}{Definition}
\newtheorem{example}{Example}
\newtheorem{remark}{Remark}
\numberwithin{equation}{section}
\begin{document}
\title{Doob--Meyer for rough paths}
\author{Peter Friz, Atul Shekhar}
\address{TU\ and WIAS\ Berlin (first and corresponding author,
friz@math.tu-berlin.de), TU\ Berlin (second author)}

\begin{abstract}
Recently, Hairer--Pillai proposed the notion of $\theta $-roughness of a
path which leads to a deterministic Norris lemma. In the Gubinelli framework
(H\"{o}lder, level $2$) of rough paths, they were then able to prove a H\"{o}%
rmander type result (SDEs driven by fractional Brownian motion, $H>1/3$). We
take a step back and propose a natural "roughness" condition relative to a
given $p$-rough path in the sense of Lyons; the aim being a Doob-Meyer
result for rough integrals in the sense of Lyons. The interest in our
(weaker) condition is that it is immediately verified for large classes of
Gaussian processes, also in infinite dimensions. We conclude with an
application to non-Markovian system under H\"{o}rmander's condition.
\end{abstract}

\maketitle

\section{Introduction}

Recently, Hairer--Pillai \cite{HP11} proposed the notion of $\theta $%
-roughness of a path which leads to a deterministic Norris lemma, i.e. some
sort of quantitative Doob-Meyer decomposition, for (level-2, H\"{o}lder)
rough integrals in the sense of Gubinelli. It is possible to check that this
roughness condition holds for fractional Brownian motion (fBm); indeed in 
\cite{HP11} the author show $\theta $-roughness for any $\theta >H$ where $H$
denotes the Hurst parameter. (Recall that Brownian motion corresponds to $%
H=1/2$; in comparison, the regime $H<1/2$ should be thought of as
"rougher".) All this turns out to be a key ingredient in their H\"{o}rmander
type result for stochastic differential equations driven by fBm, any $H>1/3$%
, solutions of which are in general non-Markovian.

In the present note we take a step back and propose a natural "roughness"
condition relative to a given $p$-rough path (of arbitrary level $\left[ p%
\right] =1,2,\dots $) in the sense of Lyons; the aim being a Doob-Meyer
result for (general) rough integrals in the sense of Lyons. The interest in
our (weaker) condition is that it is immediately verified for large classes
of Gaussian processes, also in infinite dimensions. (In essence one only
needs a Khintchine law of iterated logarithms for $1$-dimensional
projections.)

We conclude with an application to non-Markovian systems under H\"{o}%
rmander's condition, in the spirit of \cite{CF}.

\section{Truely "rough" paths and a deterministic Doob-Meyer result}

Let $V$ be a Banach-space. Let $p\geq 1$. Assume $f\in Lip^{\gamma }\left(
V,L\left( V,W\right) \right) $, $\gamma >p-1$, and $\mathbf{X:}\left[ 0,T%
\right] \rightarrow V$ to be a $p$-rough path in the sense of T. Lyons \cite%
{L98, LQ02} controlled by $\omega $.

Recall that such a rough path consists of a underlying path $X:\left[ 0,T%
\right] \rightarrow V$, together with higher order information which
somewhat \textit{prescribes} the iterated integrals $\int_{0}^{\cdot
}dX_{t_{1}}\otimes ...\otimes dX_{t_{k}}$ for $1<k\leq \left[ p\right] $.

\begin{definition}
\label{DefRough}For fixed $s\in \lbrack 0,T)$ we call $X$ "\textit{rough at
time }$s$" if (convention $0/0:=0$)%
\begin{equation*}
\left( \ast \right) :\forall v^{\ast }\in V^{\ast }\backslash \left\{
0\right\} :\lim \sup_{t\downarrow s}\frac{\left\vert \left\langle v^{\ast
},X_{s,t}\right\rangle \right\vert }{\omega \,\left( s,t\right) ^{2/p}}%
=+\infty .
\end{equation*}%
If $X$ is rough on some dense set of $\left[ 0,T\right] $, we call it 
\textit{truely rough}.
\end{definition}

\begin{theorem}
\label{ThmDBRP}(i) Assume $X$ is rough at time $s$. Then 
\begin{equation*}
\int_{s}^{t}f\left( X\right) d\mathbf{X}=O\left( \omega \,\left( s,t\right)
^{2/p}\right) \text{ as }t\downarrow s\implies f\left( X_{s}\right) =0\text{.%
}
\end{equation*}%
(i') As a consequence, if $X$ is truely rough, then%
\begin{equation*}
\int_{0}^{\cdot }f\left( X\right) d\mathbf{X}\equiv 0\text{ on }\left[ 0,T%
\right] \implies f\left( X_{\cdot }\right) \equiv 0\text{ on }\left[ 0,T%
\right] .
\end{equation*}%
(i\textquotedblright ) As another consequence, assume $g\in C\left(
V,W\right) $ and $\left\vert t-s\right\vert =O(\omega \,\left( s,t\right)
^{2/p})$, satisfied e.g. when $\omega \left( s,t\right) \asymp t-s$ and $%
p\geq 2$ (the "rough" regime of usual interest) then 
\begin{equation*}
\int_{0}^{\cdot }f\left( X\right) d\mathbf{X}+\int_{0}^{\cdot }g\left(
X\right) dt\equiv 0\text{ on }\left[ 0,T\right] \implies f\left( X_{\cdot
}\right) ,g\left( X_{\cdot }\right) \equiv 0\text{ on }\left[ 0,T\right] .
\end{equation*}%
(ii) Assume $Z:=X\oplus Y$ lifts to a rough path and set, with $\tilde{f}%
\left( z\right) \left( x,y\right) :=f\left( z\right) x$, 
\begin{equation*}
\int f\left( Z\right) d\mathbf{X:=}\int \tilde{f}\left( Z\right) d\mathbf{Z}%
\text{.}
\end{equation*}%
Then the conclusions from (i),(i') and (i\textquotedblright ), with $%
g=g\left( Z\right) $, remain valid.
\end{theorem}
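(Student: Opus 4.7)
The backbone of my argument is the local Davie-type expansion of the Lyons rough integral, a standard sewing-lemma consequence: for $t \downarrow s$,
\begin{equation*}
\int_s^t f(X)\,d\mathbf{X} \;=\; f(X_s)\,X_{s,t} \;+\; \sum_{k=2}^{[p]} D^{k-1}f(X_s)\bigl[\mathbf{X}^{(k)}_{s,t}\bigr] \;+\; R_{s,t},
\end{equation*}
with $\mathbf{X}^{(k)}_{s,t} = O(\omega(s,t)^{k/p})$ and a remainder $|R_{s,t}| \lesssim \omega(s,t)^{(\gamma+1)/p}$ coming from $f \in Lip^{\gamma}$, $\gamma > p-1$. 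Every $k\geq 2$ term, and the remainder, is then $O(\omega(s,t)^{2/p})$ (the rough regime $p\geq 2$ is the comfortable case; for $p\in[1,2)$ one squeezes this out of $\gamma>p-1$). Consequently the hypothesis in (i) collapses to $f(X_s)\,X_{s,t} = O(\omega(s,t)^{2/p})$ as $t\downarrow s$.

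For (i) I would argue by contradiction. If $f(X_s) \in L(V,W)$ were non-zero, pick $v\in V$ with $f(X_s)v\neq 0$, and by Hahn--Banach a $w^*\in W^*$ with $\langle w^*, f(X_s)v\rangle\neq 0$. Then $v^* := f(X_s)^* w^* \in V^*\setminus\{0\}$ satisfies
\begin{equation*}
|\langle v^*, X_{s,t}\rangle| \;=\; |\langle w^*, f(X_s)X_{s,t}\rangle| \;\leq\; \|w^*\|\,\|f(X_s)X_{s,t}\| \;=\; O(\omega(s,t)^{2/p}),
\end{equation*}
in direct contradiction with roughness of $X$ at $s$ applied to $v^*$. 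Hence $f(X_s)=0$.

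Parts (i') and (i'') should then fall out cheaply. True roughness lets me invoke (i) at a dense set of $s$, so $f(X_\cdot)=0$ there, and continuity of $f$ and $X$ propagates this to all of $[0,T]$. For (i'') the hypothesis $|t-s| = O(\omega(s,t)^{2/p})$ together with continuity of $g(X)$ gives $\int_s^t g(X)\,dr = O(\omega(s,t)^{2/p})$, hence $\int_s^t f(X)\,d\mathbf{X}$ inherits the same bound and (i')/(i) force $f(X_\cdot)\equiv 0$; feeding this back, $\int_0^\cdot g(X)\,dt\equiv 0$ yields $g(X_\cdot)\equiv 0$ by differentiation of a continuous integral.

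Finally (ii) is the same machinery applied to the lifted path $\mathbf{Z}$ and integrand $\tilde f$. The local expansion reads $\int_s^t \tilde f(Z)\,d\mathbf{Z} = f(Z_s)\,X_{s,t} + O(\omega(s,t)^{2/p})$, since by construction $\tilde f(Z_s)\,Z_{s,t} = f(Z_s)\,X_{s,t}$ and the higher $\mathbf{Z}$-levels are absorbed into the $\omega(s,t)^{2/p}$ remainder. The Hahn--Banach argument then runs verbatim, now with $v^* = f(Z_s)^* w^*$ and the roughness of the $X$-component at $s$, to conclude $f(Z_s)=0$. The main technical obstacle I anticipate is simply the initial bookkeeping: writing out the Lyons expansion precisely and checking that the remainder plus higher-level terms all sit below the $\omega(s,t)^{2/p}$ threshold across the relevant range of $p$; once that is in hand, the rest is the Hahn--Banach trick pitted against the roughness hypothesis.
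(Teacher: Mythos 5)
Your proposal is correct and takes essentially the same route as the paper's own proof: the local expansion $\int_s^t f(X)\,d\mathbf{X}=f(X_s)X_{s,t}+O\left(\omega(s,t)^{2/p}\right)$ reduces the hypothesis to a bound on $f(X_s)X_{s,t}$, which is then played against the roughness condition via the Hahn--Banach functional $v^{\ast}=f(X_s)^{\ast}w^{\ast}$, with (i'), (i'') and (ii) following exactly as in the paper (bounding the drift term by $|t-s|$ and using $\tilde f(Z_s)Z_{s,t}=f(Z_s)X_{s,t}$). Your only deviations are cosmetic: you phrase the duality step as a direct contradiction with a single chosen $w^{\ast}$ rather than showing $v^{\ast}=0$ for all $w^{\ast}$, and you spell out the higher-level terms of the Euler/Davie expansion that the paper absorbs into the cited basic estimate.
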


\begin{remark}
Solutions of rough differential equations $dY=V\left( Y\right) d\mathbf{X}$
in the sense of Lyons are understood in the integral sense, based on the
integral defined in (ii) above. This is our interest in this (immediate)
extension of part (i).
\end{remark}

\begin{proof}
(i) A basic estimate (e.g. \cite{CQ02}) for the $W$-valued rough integral is%
\begin{equation*}
\int_{s}^{t}f\left( X\right) dX=f\left( X_{s}\right) X_{s,t}+O\left( \omega
\,\left( s,t\right) ^{2/p}\right) .
\end{equation*}%
By assumption, for fixed $s\in \lbrack 0,T)$, we have%
\begin{equation*}
0=\frac{f\left( X_{s}\right) X_{s,t}}{\omega \,\left( s,t\right) ^{2/p}}%
+O\left( 1\right) \text{ as }t\downarrow s
\end{equation*}%
and thus, for any $w^{\ast }\in W^{\ast }$,%
\begin{equation*}
\,\frac{|\left\langle v^{\ast },X_{s,t}\right\rangle |}{\omega \,\left(
s,t\right) ^{2/p}}:=\left\vert \left\langle w^{\ast },\frac{f\left(
X_{s}\right) X_{s,t}}{\omega \,\left( s,t\right) ^{2/p}}\right\rangle
\right\vert =O\left( 1\right) \text{ as }t\downarrow s;
\end{equation*}%
where $v^{\ast }\in V^{\ast }$ is given by $V\ni v\mapsto \left\langle
w^{\ast },f\left( X_{s}\right) v\right\rangle $ recalling that $f\left(
X_{s}\right) \in L\left( V,W\right) $. Unless $v^{\ast }=0$, the assumption $%
(\ast )$ implies that, along some sequence $t_{n}\downarrow s$, we have the
divergent behaviour $\left\vert \left\langle v^{\ast
},X_{s,t_{n}}\right\rangle \right\vert /\omega \,\left( s,t_{n}\right)
^{2/p}\rightarrow \infty $, which contradicts that the same expression is $%
O\left( 1\right) $ as $t_{n}\downarrow s$. We thus conclude that $v^{\ast
}=0 $. In other words,%
\begin{equation*}
\forall w^{\ast }\in W^{\ast },v\in V:\left\langle w^{\ast },f\left(
X_{s}\right) v\right\rangle =0.
\end{equation*}%
and this clearly implies $f\left( X_{s}\right) =0$. (Indeed, assume
otherwise i.e. $\exists v:w:=f\left( X_{s}\right) v\neq 0$. Then define $%
\,\left\langle w^{\ast },\lambda w\right\rangle :=\lambda $ and extend,
using Hahn-Banach if necessary, $w^{\ast }$ from \textrm{span}$\left(
w\right) \subset W$ to the entire space, such as to obtain the contradiction 
$\left\langle w^{\ast },f\left( X_{s}\right) v\right\rangle =1$.)$\newline
$\newline
(i\textquotedblright ) From the assumptions, $\int_{s}^{t}g\left(
X_{r}\right) dr\leq \left\vert g\right\vert _{\infty }\left\vert
t-s\right\vert =O\left( \omega \,\left( s,t\right) ^{2/p}\right) .$ We may
thus use (i) to conclude $f\left( X_{s}\right) =0$ on $s\in \lbrack 0,T)$.
It follows that $\int_{0}^{\cdot }g\left( X_{r}\right) dr\equiv 0$ and by
differentiation, $g\left( X_{\cdot }\right) \equiv 0$ on $\left[ 0,T\right] $%
.\newline
(ii) By definition of $\int f\left( Z\right) d\mathbf{X}$ and $\tilde{f}$
respectively,%
\begin{eqnarray*}
\int_{s}^{t}f\left( Z\right) d\mathbf{X}\mathbf{:=} &&\int_{s}^{t}\tilde{f}%
\left( Z\right) d\mathbf{Z} \\
&\mathbf{=}&\tilde{f}\left( Z_{s}\right) Z_{s,t}+O\left( \omega \,\left(
s,t\right) ^{2/p}\right) \\
&=&f\left( Z_{s}\right) X_{s,t}+O\left( \omega \,\left( s,t\right)
^{2/p}\right)
\end{eqnarray*}%
and the identical proof (for (i'), then (i\textquotedblright )) goes
through, concluding $f\left( Z_{s}\right) =0$.
\end{proof}

\begin{remark}
The reader may wonder about the restriction to $p\geq 2$ in
(i\textquotedblright ) for H\"{o}lder type controls $\omega \left(
s,t\right) \asymp t-s$. Typically, when $p<2$, one uses Young theory,
thereby avoiding the full body of rough path theory. That said, one can
always view a path of finite $p$-variation, $p<2$, as rough path of finite $%
2 $-variation (iterated integrals are well-defined as Young integrals).
Moreover, by a basic consistency result, the respective integrals (Young,
rough) coincide. In the context of fBM with Hurst parameter $H\in \left(
1/2,1\right) ,$ for instance, we can take $p=2$ and note that in this
setting fBM is truely rough (cf. below for a general argument based on the
law of iterated logarithm). By the afore-mentioned consistency, the
Doob--Meyer decomposition of (i\textquotedblright ) then becomes a statement
about Young integrals. Such a decomposition was previously used in \cite{BH}.
\end{remark}

\begin{remark}
The argument is immediately adapted to the Gubinelli setting of "controlled"
paths and would (in that context) yield uniqueness of the derivative process.
\end{remark}

\begin{remark}
In definition \ref{DefRough}, one could replace the denominator $\omega
\,\left( s,t\right) ^{2/p}$ by $\omega \left( s,t\right) ^{\theta }$, say
for $1/p<$ $\theta \leq 2/p$. Unlike \cite{HP11}, where $2/p-\theta $
affects the quantitative estimates, there seems to be no benefit of such a
stronger condition in the present context.
\end{remark}

\section{True roughness of stochastic processes}

Fix $\rho \in \lbrack 1,2)$ and $p\in \left( 2\rho ,4\right) $. We assume
that the $V$-valued stochastic process $X$ lifts to a random $p$-rough path.
We assume $V^{\ast }$ separable which implies separability of the unit
sphere in $V^{\ast }$ and also (by a standard theorem) separability of $V$.
(Separability of the dual unit sphere in the weak-$\ast $ topology,
guaranteed when $V$ is assumed to be separable, seems not enough for our
argument below.)

The following $2$ conditions should be thought of as a weak form of a LIL
lower bound, and a fairely robust form of a LIL upper bound. As will be
explained below, they are easily checked for large classes of Gaussian
processes, also in infinite dimensions.

\begin{condition}
Set $\psi \left( h\right) =h^{\frac{1}{2\rho }}\left( \ln \ln 1/h\right)
^{1/2}$. Assume (i) there exists $c>0$ such that for every fixed dual unit
vector $\varphi \in V^{\ast }$ and $s\in \lbrack 0,T)$ 
\begin{equation*}
\text{ }\mathbb{P}\left[ \lim \sup_{t\downarrow s}\left\vert \varphi \left(
X_{s,t}\right) \right\vert /\psi \left( t-s\right) \geq c\right] =1
\end{equation*}%
and (ii) for every fixed $s\in \lbrack 0,T)$, 
\begin{equation*}
\mathbb{P}\left[ \lim \sup_{t\downarrow s}\frac{\left\vert
X_{s,t}\right\vert _{V}}{\psi \left( t-s\right) }<\infty \right] =1
\end{equation*}
\end{condition}

\begin{theorem}
\label{ThmGrough}\bigskip Assume $X$ satisfies the above condition. Then $X$
is a.s. truely rough.
\end{theorem}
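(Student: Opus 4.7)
The plan is to construct a single probability-one event on which the roughness condition of Definition \ref{DefRough} holds at every point of a prescribed countable dense subset of $[0,T]$. Choose a countable dense set $D\subset[0,T)$ and, using norm-separability of the unit sphere in $V^*$, a countable dense subset $\{\varphi_k\}_{k\geq 1}$ of that sphere. For each pair $(s,\varphi_k)$ condition (i) supplies an almost-sure event, and for each $s\in D$ condition (ii) supplies another; intersecting this countable collection yields an event $\Omega_0$ of full probability on which, for every $s\in D$ and every $k$,
$$\limsup_{t\downarrow s}\frac{|\varphi_k(X_{s,t})|}{\psi(t-s)}\geq c,\qquad M(s):=\limsup_{t\downarrow s}\frac{|X_{s,t}|_V}{\psi(t-s)}<\infty.$$

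Working on $\Omega_0$, the next step is to upgrade the limsup lower bound from the countable family $\{\varphi_k\}$ to every unit $\varphi\in V^*$. The inequality $|\varphi(X_{s,t})|\geq|\varphi_k(X_{s,t})|-\|\varphi-\varphi_k\|_{V^*}\,|X_{s,t}|_V$, divided by $\psi(t-s)$ and passed through $\limsup_{t\downarrow s}$, gives
$$\limsup_{t\downarrow s}\frac{|\varphi(X_{s,t})|}{\psi(t-s)}\geq c-\|\varphi-\varphi_k\|_{V^*}\,M(s),$$
so letting $\varphi_k\to\varphi$ in the dual norm preserves the bound $\geq c$; by homogeneity one obtains a strictly positive $\psi$-scaled limsup for $|\langle v^*,X_{s,t}\rangle|$ at every $v^*\in V^*\setminus\{0\}$.

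It remains to convert this $\psi$-scaled bound into the $\omega(s,t)^{2/p}$-scaled condition $(\ast)$. The parameter constraints $\rho\geq 1$ and $p<4$ force $p<4\rho$, equivalently $1/(2\rho)<2/p$, hence $\psi(h)/h^{2/p}\to\infty$ as $h\downarrow 0$; an analogous exponent count handles the Gaussian-type scaling $\omega(s,t)\asymp (t-s)^{1/\rho}$. Extracting a sequence $t_n\downarrow s$ along which $|\varphi(X_{s,t_n})|/\psi(t_n-s)\geq c/2$ and multiplying by the diverging ratio $\psi(t_n-s)/\omega(s,t_n)^{2/p}$ yields $\limsup_{t\downarrow s}|\langle v^*,X_{s,t}\rangle|/\omega(s,t)^{2/p}=+\infty$, i.e.\ roughness at $s$. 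Since this holds for every $s\in D$ and $D$ is dense in $[0,T]$, $X$ is truely rough on $\Omega_0$. The main obstacle is the second paragraph: condition (i) is only an a.s.\ statement for each fixed $\varphi$, and the uncountable family of unit functionals cannot be handled by a naive union bound. Norm-separability of the dual unit sphere (strictly stronger than weak-$*$ separability) is what enables the countable reduction, and the LIL upper bound of condition (ii) is precisely what makes the norm-approximation step go through, since the approximation error $\|\varphi-\varphi_k\|_{V^*}\cdot|X_{s,t}|_V/\psi(t-s)$ is controlled uniformly in $t$ by $M(s)$.
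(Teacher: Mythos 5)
Your proposal is correct and takes essentially the same route as the paper's proof: reduction to a countable dense family in the dual unit sphere, the triangle inequality plus the LIL upper bound of condition (ii) to transfer the lower bound to every unit functional, and the exponent comparison $1/(2\rho)<2/p$ (following from $p<4\le 4\rho$) to pass from the $\psi$-scaled limsup to condition $(\ast)$. The only cosmetic differences are that you make the intersection over a countable dense set of times explicit, and you use the strict exponent inequality where the paper's argument, via the slowly varying factor $\left( \ln \ln 1/h\right)^{1/2}$, even covers the boundary case $\theta = 1/(2\rho)$.
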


\begin{proof}
Take a dense, countable set of dual unit vectors, say $K\subset V^{\ast }$.
Since $K$ is countable, the set on which condition (i) holds simultanously
for all $\varphi \in K$ has full measure,%
\begin{equation*}
\mathbb{P}\left[ \forall \varphi \in K:\lim \sup_{t\downarrow s}\left\vert
\varphi \left( X_{s,t}\right) \right\vert /\psi \left( t-s\right) \geq c%
\right] =1
\end{equation*}%
On the other hand, every unit dual vector $\varphi \in V^{\ast }$ is the
limit of some $\left( \varphi _{n}\right) \subset K$. Then%
\begin{equation*}
\frac{\left\vert \left\langle \varphi _{n},X_{s,t}\right\rangle \right\vert 
}{\psi \left( t-s\right) }\leq \frac{\left\vert \left\langle \varphi
,X_{s,t}\right\rangle \right\vert }{\psi \left( t-s\right) }+\left\vert
\varphi _{n}-\varphi \right\vert _{V^{\ast }}\frac{\left\vert
X_{s,t}\right\vert _{V}}{\psi \left( t-s\right) }
\end{equation*}%
so that, using $\overline{\lim }\left( \left\vert a\right\vert +\left\vert
b\right\vert \right) \leq \overline{\lim }\left( \left\vert a\right\vert
\right) +\overline{\lim }\left( \left\vert b\right\vert \right) $, and
restricting to the above set of full measure,%
\begin{equation*}
c\leq \overline{\lim_{t\downarrow s}}\frac{\left\vert \left\langle \varphi
_{n},X_{s,t}\right\rangle \right\vert }{\psi \left( t-s\right) }\leq 
\overline{\lim_{t\downarrow s}}\frac{\left\vert \left\langle \varphi
,X_{s,t}\right\rangle \right\vert }{\psi \left( t-s\right) }+\left\vert
\varphi _{n}-\varphi \right\vert _{V^{\ast }}\overline{\lim_{t\downarrow s}}%
\frac{\left\vert X_{s,t}\right\vert _{V}}{\psi \left( t-s\right) }.
\end{equation*}%
Sending $n\rightarrow \infty $ gives, with probability one, 
\begin{equation*}
c\leq \overline{\lim_{t\downarrow s}}\frac{\left\vert \left\langle \varphi
,X_{s,t}\right\rangle \right\vert }{\psi \left( t-s\right) }.
\end{equation*}%
Hence, for a.e. sampe $X=X\left( \omega \right) $ we can pick a sequence $%
\left( t_{n}\right) $ converging to $s$ such that $\left\vert \left\langle
\varphi ,X_{s,t_{n}}\right\rangle \right\vert /\psi \left( t_{n}-s\right)
\geq c-1/n$. On the other hand, for any $\theta \geq 1/\left( 2\rho \right) $
\begin{eqnarray*}
\frac{\left\vert \left\langle \varphi ,X_{s,t_{n}}\left( \omega \right)
\right\rangle \right\vert }{\left\vert t_{n}-s\right\vert ^{\theta }} &=&%
\frac{\left\vert \left\langle \varphi ,X_{s,t_{n}}\left( \omega \right)
\right\rangle \right\vert }{\psi \left( t_{n}-s\right) }\frac{\psi \left(
t_{n}-s\right) }{\left\vert t_{n}-s\right\vert ^{\theta }} \\
&\geq &\left( c-1/n\right) \left\vert t_{n}-s\right\vert ^{\frac{1}{2\rho }%
-\theta }L\left( t_{n}-s\right) \\
&\rightarrow &\infty \text{ \ \ }
\end{eqnarray*}%
since $c>0$ and $\theta \geq 1/\left( 2\rho \right) $ and slowly varying $%
L\left( \tau \right) :=\left( \ln \ln 1/\tau \right) ^{1/2}$ (in the extreme
case $\theta =1/\left( 2\rho \right) $ the divergence is due to the (very
slow) divergence $L\left( \tau \right) \rightarrow \infty $ as $\tau
=t_{n}-s\rightarrow 0$ .)
\end{proof}

\subsection{Gaussian processes}

The conditions put forward here are typical for Gaussian process\ (so that
the pairing $\left\langle \varphi ,X\right\rangle $ is automatically a
scalar Gaussian process). Sufficient conditions for (i), in fact, a law of
iterated logarithm, with equality and $c=1$ are e.g. found in \cite[Thm
7.2.15]{MR}. These conditions cover immediately - and from general
principles - many Gaussian (rough paths) examples, including fractional
Brownian motion ($\rho =1/\left( 2H\right) $, lifted to a rough path \cite%
{CQ02, FV10}) and the stationary solution to the stochastic heat equation on
the torus, viewed as as Gaussian processes parametrized by $x\in \lbrack
0,2\pi ]$; here $\rho =1$, the fruitful lift to a "spatial" Gaussian rough
path is due to Hairer \cite{H11}.

As for condition (ii), it holds under a very general condition \cite[Thm A.22%
]{FV10}%
\begin{equation*}
\exists \eta >0:\sup_{0\leq s,t\leq T}E\exp \left( \eta \frac{\left\vert
X_{s,t}\right\vert _{V}^{2}}{\left\vert t-s\right\vert ^{1/\rho }}\right)
<\infty .
\end{equation*}%
In presence of some scaling, this condition is immediately verfied by
Fernique's theorem.

\begin{example}
$d$-dimensional fBM is a.s. truely rough (in fact, $H$-rough)
\end{example}

In order to apply this in the context of (random) rough integration, we need
to intersect the class of truely rough Gaussian processes with the classes
of Gaussian processes which amit a rough path lift. To this end, we recall
the following standard setup \cite{FV10}. Consider a continuous $d$%
-dimensional Gaussian process, say $X$, realized as coordinate process on
the (not-too abstract) Wiener space $\left( E,\mathcal{H},\mu \right) $
where $E=C\left( \left[ 0,T\right] ,\mathbb{R}^{d}\right) $ equipped with $%
\mu $ is a Gaussian measure s.t. $X$ has zero-mean, independent components
and that $V_{\rho \text{-var}}\left( R,\left[ 0,T\right] ^{2}\right) $, the $%
\rho $-variation in 2D sense of the covariance $R$ of $X$, is finite for $%
\rho \in \lbrack 1,2)$. (In the fBM case, this condition translates to $%
H>1/4 $). From \cite[Theorem 15.33]{FV10} it follows that we can lift the
sample paths of $X$ to $p$-rough paths for any $p>2\rho $ and we denote this
process by $\mathbf{X}$, called the \emph{enhanced Gaussian process}. In
this context, modulo a deterministic time-change, condition (ii) will always
be satisfied (with the same $\rho $). The non-degeneracy condition (i), of
course, cannot be expect to hold true in this generality; but, as already
noted, conditions are readily available \cite{MR}.

\begin{example}
$Q$-Wiener processes are a.s. truely rough. More precisely, consider a
separable Hilbert space $H$ with ONB $\left( e_{k}\right) $, $\left( \lambda
_{k}\right) \in l^{1}$, $\lambda _{k}>0$ for all $k$, and a countable
sequence $\left( \beta ^{k}\right) $ of independent standard Brownians. Then
the limit%
\begin{equation*}
X_{t}:=\sum_{k=1}^{\infty }\lambda _{k}^{1/2}\beta _{t}^{k}e_{k}
\end{equation*}%
exists a.s. and in $L^{2}$, uniformly on compacts and defines a $Q$-Wiener
process, where $Q=\sum \lambda _{k}\,\left\langle e_{k},\cdot \right\rangle $
is symmetric, non-negative and trace-class. (Conversely, any such operator $%
Q $ on $H$ can be written in this form and thus gives rise to a $Q$-Wiener
process.) By Brownian scaling and Fernique, condition (ii) is obvious. As
for condition (i), let $\varphi $ be an arbitrary unit dual vector and note
that $\varphi \left( X_{\cdot }\right) /\sigma _{\varphi }$ is standard\
Brownian provided we set%
\begin{equation*}
\sigma _{\varphi }^{2}:=\sum \lambda _{k}\left\langle \varphi
,e_{k}\right\rangle ^{2}>0.
\end{equation*}%
By Khintchine's law of iterated logarithms for standard Brownian motion, for
fixed $\varphi $ and $s$, with probability one,%
\begin{equation*}
\lim \sup_{t\downarrow s}\left\vert \varphi \left( X_{s,t}\right)
\right\vert /\psi \left( t-s\right) \geq \sqrt{2}\sigma _{\varphi }.
\end{equation*}%
Since $\varphi \mapsto \sigma _{\varphi }^{2}$ is weakly continuous (this
follows from $\left( \lambda \right) \in l^{1}$ and dominated convergence)
and compactness of the unit sphere in the weak topology, $c:=\inf \sigma
_{\varphi }>0$, and so condition (ii) is verified.
\end{example}

Let us quickly note that $Q$-Wiener processes can be naturally enhanced to
rough paths. Indeed, it suffices to define the $H\otimes H$-valued "second
level" increments as%
\begin{equation*}
\left( s,t\right) \mapsto \mathbb{X}_{s,t}:=\sum_{i,j}\lambda
_{i}^{1/2}\lambda _{j}^{1/2}\int_{s}^{t}\beta _{s,\cdot }^{i}\circ d\beta
^{j}e_{i}\otimes e_{j}.
\end{equation*}%
which essentially reduces the construction of the "area-process" to the L%
\'{e}vy area of a 2-dimensional standard Brownian motion. (Alternatively,
one could use integration against $Q$-Wiener processes.) Rough path
regularity, $\left\vert \mathbb{X}_{s,t}\right\vert _{H\otimes H}=O\left(
\left\vert t-s\right\vert ^{2\alpha }\right) $ for some $\alpha \in
(1/3,1/2] $ (in fact: any $\alpha <1/2$), is immediate from a suitable
Kolmogrov-type or GRR criterion (e.g. \cite{FH12, FV10}).

Variations of the scheme are possible of course, it is rather immediate to
define $Q$-Gaussian processes in which $\left( \beta ^{k}\right) $ are
replaced by $\left( X^{k}\right) $, a sequence of independent Gaussian
processes, continuous each with covariance uniformly of finite $\rho $%
-variation, $\rho <2$.

Let us insist that the (random) rough integration against Brownian, or $Q$%
-Wiener processes) is well-known to be consistent with Stratonovich
stochastic integration (e.g. \cite{LQ02, FV10, FH12}). In fact, one can also
construct a rough path lift via It\^{o}-integration, in this case (random)
rough integration (now against a "non-geometric" rough path) coincides with
It\^{o}-integration.

\section{An application}

Let $X$ be a continuous $d$-dimensional Gaussian process which admits a
rough path lift in the sense described at the end of the previous section.
Assume in addition that the Cameron-Martin space $\mathcal{H}$ has \emph{%
complementary Young regularity} in the sense that $\mathcal{H}$ embeds
continuously in $C^{q\text{-var}}\left( \left[ 0,T\right] ,\mathbb{R}%
^{d}\right) $ with $\frac{1}{p}+\frac{1}{q}>1$. Note $q\leq p$ for $\mu $ is
supported on the paths of finite $p$-variation. This is true in great
generality with $q=\rho $ whenever $\rho <3/2$ and also for fBM (and
variations thereof) for all $H>1/4$. Complementary Young regularity of the
Cameron-Martin space is a natural condition, in particular in the context of
Malliavin calculus and has been the basis of non-Markovian H\"{o}rmander
theory, the best results up to date were obtained in \cite{CF} (existence of
density only, no drift, general non-degenerate Gaussian driving noise) and
then \cite{HP11} (existence of a smooth density, with drift, fBM $H>1/3$).
We give a quick proof of existence of density, \textit{with drift,} with
general non-degenerate Gaussian driving noise (including fBM $H>1/4$). To
this end, consider the rough differential equation%
\begin{equation*}
dY=V_{0}\left( Y\right) dt+V\left( Y\right) d\mathbf{X}
\end{equation*}%
subject to a \textit{weak} H\"{o}rmander condition at the starting point.
(Vector fields, on $\mathbb{R}^{e}$, say are assumed to be bounded, with
bounded derivatives of all orders.) In the drift free case, $V_{0}=0$,
conditions on the Gaussian driving signal $X$ where given in \cite{CF} which
guarantee existence of a density. With no need of going into full detail
here, the proof (by contradiction) follows a classical pattern which
involves a deterministic, non-zero vector $z$ s.t. $z^{T}J_{0\leftarrow
\cdot }^{\mathbf{X}\left( \omega \right) }\left( V_{k}\left( Y_{\cdot
}\left( \omega \right) \right) \right) \equiv 0$ on $[0,\Theta \left( \omega
\right) ),$every $k\in \left\{ 1,\dots ,d\right\} $ for some a.s. positive
random time $\Theta $. (This follows from a global non-degeneracy condition,
which, for instance, rules out Brownian bridge type behaviour, and a 0-1
law, see conditions 3,4 in \cite{CF}). From this%
\begin{equation*}
\int_{0}^{\cdot }z^{T}J_{0\leftarrow t}^{\mathbf{X}}\left( \left[ V,V_{k}%
\right] \left( Y_{t}\right) \right) d\mathbf{X}+\int_{0}^{\cdot
}z^{T}J_{0\leftarrow t}^{\mathbf{X}}\left( \left[ V_{0},V_{k}\right] \left(
Y_{t}\right) \right) dt\equiv 0
\end{equation*}%
on $[0,\Theta \left( \omega \right) );$ here $V=\left( V_{1},\dots
V_{d}\right) $ and $V_{0}$ denote smooth vector fields on $\mathbb{R}^{e}$
along which the RDEs under consideration do not explode. Now we assume the
driving (rough) path to be truely rough, at least on a positive
neighbourhood of $0$. Since $Z:=\left( X,Y,J\right) $ can be constructed
simultanously as rough path, say $\mathbf{Z}$, we conclude with Theorem \ref%
{ThmDBRP}, (iii):%
\begin{equation*}
z^{T}J_{0\leftarrow \cdot }^{\mathbf{X}}\left( \left[ V_{l},V_{k}\right]
\left( Y_{\cdot }\right) \right) \equiv 0\equiv z^{T}J_{0\leftarrow \cdot }^{%
\mathbf{X}}\left( \left[ V_{0},V_{k}\right] \left( Y_{t}\right) \right) .
\end{equation*}%
Usual iteration of this argument shows that $z$ is orthogonal to $%
V_{1},\dots ,V_{d}$ and then all Lie-brackets (also allowing $V_{0}$),
always at $y_{0}$. Since the weak-H\"{o}rmander condition asserts precisely
that all these vector fields span the tangent space (at starting point $%
y_{0} $) we then find $z=0$ which is the desired contradiction. We note that
the true roughness condition on the driving (rough) path replaces the
support type condition put forward in \cite{CF}. Let us also note that this
argument allows a painfree handling of a drift vector field (not including
in \cite{CF}); examples include immediately fBM with $H>1/4$ but we have
explained above that far more general driving signals can be treated. In
fact, it transpires true roughness of $Q$-Wiener processes (and then,
suitables generalizations to $Q$-Gaussian processes) on a seperable Hilbert
space $\mathbb{H}$ allows to obtain a H\"{o}rmander type result where the $Q$%
-process "drives" countably many vectorfields given by $V:$ $\mathbb{R}%
^{e}\rightarrow Lin\left( \mathbb{H},\mathbb{R}^{e}\right) .$

The Norris type lemma put forward in \cite{HP11} suggests that that the
argument can be made quantitative, at least in finite dimensions, thus
allowing for a H\"{o}rmander type theory (existence of smooth densities) for
RDE driven by general non-degenerate Gaussian signals. (In \cite{HP11} the
authors obtain this result for fBM, $H>1/3\,$.)

\begin{acknowledgement}
P.K. Friz has received funding from the European Research Council under the
European Union's Seventh Framework Programme (FP7/2007-2013) / ERC grant
agreement nr. 258237. A. Shekhar is supported by Berlin Mathematical School
(BMS).
\end{acknowledgement}

\end{document}